\numberwithin{equation}{section}
\newtheorem{thm}{Theorem}[section]
\newtheorem{lemma}[thm]{Lemma}
\newtheorem{cor}[thm]{Corollary}
\theoremstyle{definition}
\newtheorem{defn}[thm]{Definition}
\newtheorem{remark}[thm]{Remark}
\newtheorem{prop}[thm]{Proposition}
\newtheorem{question}[thm]{Question}
\title[Tensorization of quasi-Hilbertian Sobolev Spaces]{Tensorization of quasi-Hilbertian Sobolev Spaces}
\author{Sylvester Eriksson-Bique}
\author{Tapio Rajala}
\author{Elefterios Soultanis}
\address{Sylvester Eriksson-Bique\\
Research Unit of Mathematical Sciences \\
P.O.Box 3000\\
FI-90014 Oulu}
\email{\tt sylvester.d.eriksson-bique@jyu.fi}
\address{Tapio Rajala\\
University of Jyvaskyla \\
Department of Mathematics and Statistics \\
P.O. Box 35 (MaD) \\
FI-40014 University of Jyvaskyla \\
Finland}
\email{\tt tapio.m.rajala@jyu.fi}
\address{Elefterios Soultanis\\
Radboud University\\
Department of Mathematics\\
P.O. Box 9010, Postvak 59 \\
6500 GL Nijmegen\\
Netherlands}
\email{\tt elefterios.soultanis@gmail.com}
\def\XXint#1#2#3{{\setbox0=\hbox{$#1{#2#3}{\int}$}
		\vcenter{\hbox{$#2#3$}}\kern-.5\wd0}}
\newcommand{\N}{\ensuremath{\mathbb{N}}}
\newcommand{\R}{\ensuremath{\mathbb{R}}}
\newcommand{\LIP}{\ensuremath{\mathrm{LIP}}}
\newcommand{\Lip}{\ensuremath{\mathrm{Lip}}}
\newcommand{\defeq}{\mathrel{\mathop:}=}
\newcommand{\Mod}{\ensuremath{\mathrm{Mod}}}
\newcommand{\eps}{\varepsilon}
\newcommand{\ud}{\mathrm{d}}
\newcommand{\inv}{^{-1}}
\newcommand{\cE}{\mathcal{E}}
\newcommand{\cD}{\mathcal{D}}
\begin{document}

\maketitle

\begin{abstract} The tensorization problem for Sobolev spaces asks for a characterization of how the Sobolev space on a product metric measure space $X\times Y$ can be determined from its factors.
We show that two natural descriptions of the Sobolev space from the literature coincide, $W^{1,2}(X\times Y)=J^{1,2}(X,Y)$, thus settling the tensorization problem for Sobolev spaces in the case $p=2$, when $X$ and $Y$ are \emph{infinitesimally quasi-Hilbertian}, i.e. the Sobolev space $W^{1,2}$ admits an equivalent renorming by a Dirichlet form. This class includes in particular metric measure spaces $X,Y$ of finite Hausdorff dimension as well as infinitesimally Hilbertian spaces.


More generally for $p\in (1,\infty)$ we obtain the norm-one inclusion $\|f\|_{J^{1,p}(X,Y)}\le \|f\|_{W^{1,p}(X\times Y)}$ and show that the norms agree on the algebraic tensor product $W^{1,p}(X)\otimes W^{1,p}(Y)\subset W^{1,p}(X\times Y)$. When $p=2$ and $X$ and $Y$ are infinitesimally quasi-Hilbertian, standard Dirichlet form theory yields the density of $W^{1,2}(X)\otimes W^{1,2}(Y)$ in $J^{1,2}(X,Y)$ thus implying the equality of the spaces. Our approach raises the question of the density of $W^{1,p}(X)\otimes W^{1,p}(Y)$ in $J^{1,p}(X,Y)$ in the general case.

\end{abstract}

\section{Introduction}

Over the last three decades Sobolev spaces over \emph{metric spaces} have become a prominent feature in a plethora of geometric problems ranging from Plateau-type problems \cite{fit-wen21, lyt17'',lyt18} to quasiconformal uniformization questions \cite{lyt-wen20, raj17} and structural problems of spaces with Ricci curvature bounds \cite{ags14a, ags14b, gig13,gig18}. During that time their theory has been studied intensively and significant developments include the unification of different definitions of Sobolev spaces, the density (in energy) of Lipschitz functions in, and the reflexivity of Sobolev spaces over metric spaces in a very general setting - see e.g. \cite{amb15,che99,teriseb,gig15,sha00}. 

The \emph{tensorization problem for Sobolev spaces}, first considered in \cite{ags14b}, asks whether Sobolev regularity of a function of two variables can be deduced from the existence and integrability of directional derivatives. More precisely, let $X=(X,d_X,\mu)$ and $Y=(Y,d_Y,\nu)$ be two metric measure spaces, $p\in [1,\infty)$ 
and $\left(X\times Y, \sqrt{d_X^2 + d_Y^2}, \mu \times \nu\right)$ their (Euclidean) product. Given $p\ge 1$, the tensorization problem asks whether the Sobolev space $W^{1,p}(X\times Y)$ coincides with the \emph{Beppo--Levi space} $J^{1,p}(X,Y)$ consisting of functions $f\in L^p(X\times Y)$ for which  $f(x,\cdot) \in W^{1,p}(Y)$ for $\mu$-almost every $x \in X$,  $f(\cdot,y) \in W^{1,p}(X)$ for $\nu$-almost every $y \in Y$, and 
\begin{equation}\label{eq:derivative}
	(x,y)\mapsto \sqrt{|Df(\cdot,y)|^2(x) + |Df(x,\cdot)|^2(y)}\in L^p(X\times Y).
\end{equation}
In addition, tensorization of Sobolev spaces requires that the minimal $p$-weak upper gradient of any $f\in J^{1,p}(X,Y)$ is given by \eqref{eq:derivative}. For the definition of $W^{1,p}(X)$ used in this paper, see section \ref{subsec:notations}.

While immediate in Euclidean spaces, a positive answer to the tensorization problem is non-trivial in the non-smooth setting, and needed e.g. in the splitting theorem for RCD-spaces \cite{gig18}. Further, it is of crucial importance in a  variety of settings where partial derivatives can be bounded, and one wishes to optain a bound on the full derivative, see e.g. \cite{AmbrosioPinamontiSpeight, ebgkns22}.  Surprisingly, the problem has remained open, even though tensorization of many other properties such as the doubling property, Poincar\'e inequalities and curvature lower bounds are well known. Previous partial results for $p=2$ include the work of Ambrosio--Gigli--Savar\'e \cite{ags14a} for RCD-spaces, of Gigli--Han \cite{giglihanwarped} settling the case where one factor is a closed interval in $\R$, and of Ambrosio--Pinamonti--Speight \cite{AmbrosioPinamontiSpeight} for PI-spaces. Working in the general case $p\ge 1$ (with a finite dimensionality assumption on the factors) the authors of the present manuscript proved tensorization of Sobolev spaces assuming \emph{one} of the factors is a PI-space \cite{teritapioseb}. The present work strengthens all of these results in the $p=2$ case, and proves stronger results for all $p>1$.


\subsection{Tensorization in infinitesimally quasi-Hilbertian spaces}

In this paper we establish tensorization of Sobolev spaces in the important special case $p=2$ when the factors are \emph{infinitesimally quasi-Hilbertian}. 

\begin{defn}\label{def:inf-quasi-hilb}
A metric measure space $X$ is infinitesimally quasi-Hilbertian if there exists a closed Dirichlet form $\mathcal E$ with domain $W^{1,2}(X)$ such that $\sqrt{\|u\|_{L^2(X)}^2+\mathcal E(u,u)}$ is an equivalent norm on $W^{1,2}(X)$.	
\end{defn}
See Section \ref{sec:quasi-hilb} for the definition of Dirichlet forms. We remark that infinitesimally Hilbertian spaces, as well as spaces admitting a $2$-weak differentiable structure (in particular spaces with finite Hausdorff dimension) are infinitesimally quasi-Hilbertian, cf. Proposition \ref{prop:dirichletexistence}.

\begin{thm}\label{thm:main}
	Suppose that $X$ and $Y$ are infinitesimally quasi-Hilbertian. Then \newline $W^{1,2}(X \times Y) = J^{1,2}(X,Y)$ and, for each $f\in J^{1,2}(X,Y)$, we have that 
	\begin{align*}
	|Df|(x,y)^2=|Df(\cdot,y)|(x)^2+|Df(x,\cdot)|(y)^2\quad 
	\end{align*}
for $\mu\times \nu$-almost every $(x,y)\in X\times Y$.
\end{thm}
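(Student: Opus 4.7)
The plan is to execute the three-step strategy outlined in the abstract: establish a norm-one inclusion $W^{1,2}(X\times Y)\hookrightarrow J^{1,2}(X,Y)$, verify equality of norms on the algebraic tensor product $W^{1,2}(X)\otimes W^{1,2}(Y)$, and then use Dirichlet form theory to obtain density of this tensor product in $J^{1,2}(X,Y)$. Combined with completeness, these three ingredients force equality of the spaces and, via polarization of the norm identity on simple tensors, the pointwise decomposition of $|Df|^2$.

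First I would prove the inclusion $W^{1,2}(X\times Y)\subset J^{1,2}(X,Y)$ with the pointwise inequality $|Df(\cdot,y)|(x)^2+|Df(x,\cdot)|(y)^2\le |Df|(x,y)^2$ a.e. The approach is a Fubini-type argument on upper gradients: if $g$ is a $2$-weak upper gradient of $f$ on $X\times Y$, then for $\nu$-a.e.\ $y\in Y$ the slice $g(\cdot,y)$ is a $2$-weak upper gradient of $f(\cdot,y)$ on $X$, and symmetrically for slices in $Y$. This requires a curve-family argument together with Fuglede's lemma to pass from upper gradients along product curves to upper gradients along horizontal and vertical slices.

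Next I would address the tensor product. For $u\in W^{1,2}(X)$, $v\in W^{1,2}(Y)$ with minimal upper gradients $g_u, h_v$, I expect $\sqrt{g_u(x)^2 v(y)^2 + u(x)^2 h_v(y)^2}$ to be a $2$-weak upper gradient of $(x,y)\mapsto u(x)v(y)$ on $X\times Y$; this can be verified by testing against a.e.\ product curve $t\mapsto (\alpha(t),\beta(t))$ via the product rule and Cauchy--Schwarz. Extending by linearity to the algebraic tensor product and combining with the reverse inequality from Step 1 yields the norm identity $\|f\|_{W^{1,2}(X\times Y)}=\|f\|_{J^{1,2}(X,Y)}$ on $W^{1,2}(X)\otimes W^{1,2}(Y)$, with the pointwise formula for $|Df|^2$ on simple tensors.

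The core, and I expect the main obstacle, is the density step. Under the quasi-Hilbertian assumption we may choose closed Dirichlet forms $\mathcal E_X$ on $L^2(X)$ and $\mathcal E_Y$ on $L^2(Y)$ whose form domains are $W^{1,2}(X)$ and $W^{1,2}(Y)$ respectively. Standard Dirichlet form theory (the construction of tensor products of Dirichlet forms, e.g.\ as in Bouleau--Hirsch or Ma--R\"ockner) then produces a closed Dirichlet form $\mathcal E$ on $L^2(X\times Y)$ with form domain equal to the completion of $W^{1,2}(X)\otimes W^{1,2}(Y)$ under the norm $\sqrt{\|\cdot\|_{L^2}^2+\mathcal E_X(\cdot,\cdot)+\mathcal E_Y(\cdot,\cdot)}$, and one identifies this domain with $J^{1,2}(X,Y)$ by checking directly that the quadratic form $f\mapsto \int (|Df(\cdot,y)|^2(x)+|Df(x,\cdot)|^2(y))\,\ud(\mu\times\nu)$ defines the associated tensor Dirichlet form via the slice description (here the quasi-Hilbertian equivalence of norms is used to legitimately invoke the Hilbert-space tensor-product construction even though $W^{1,2}$ itself need not be a Hilbert space). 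The delicate point is that the tensor product construction yields only density in the associated Hilbertian norm, so the equivalence of norms provided by Definition \ref{def:inf-quasi-hilb} is exactly what is needed to transfer density to the original $J^{1,2}$-norm.

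Finally, given density of the tensor product in $J^{1,2}(X,Y)$, completeness of $W^{1,2}(X\times Y)$ in its own norm and the fact that the two norms agree on the dense subspace $W^{1,2}(X)\otimes W^{1,2}(Y)$ force $W^{1,2}(X\times Y)=J^{1,2}(X,Y)$ with equal norms. The pointwise identity $|Df|^2(x,y)=|Df(\cdot,y)|^2(x)+|Df(x,\cdot)|^2(y)$ then follows by approximating $f$ by elements of the tensor product, passing to the limit of the norm identity applied to $f$ and $f+\lambda\varphi$ for $\varphi$ also in the tensor product, and using the parallelogram-type lower semicontinuity of minimal upper gradients to upgrade the integrated identity to an a.e.\ pointwise one.
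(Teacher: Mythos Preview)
The overall three-step strategy matches the paper's, but there is a genuine gap in your Step~1. The Fubini-slice argument you propose---observing that if $g$ is a $2$-weak upper gradient of $f$ on $X\times Y$ then $g(\cdot,y)$ is a $2$-weak upper gradient of $f(\cdot,y)$---only yields $|D_Xf|\le |Df|$ and $|D_Yf|\le |Df|$ separately, hence at best $|D_Xf|^2+|D_Yf|^2\le 2|Df|^2$. The factor $2$ cannot be removed by slicing, because horizontal and vertical test curves never witness the interaction between the two directions. The sharp inequality $|D_Xf|^2+|D_Yf|^2\le |Df|^2$ is the genuinely new content (the paper emphasizes this after stating Theorem~\ref{thm:embedding-norm-one}), and the paper obtains it by a different route: for bounded Lipschitz $f$ one finds, at a.e.\ $(x,y)$, curves $\alpha$ in $X$ and $\beta$ in $Y$ along which the directional derivatives are nearly maximal (via the representation of minimal upper gradients from \cite{teriseb}), reparametrizes them with speeds $a,b\ge 0$, and then compares $f(\tilde\alpha(h),y)-f(x,\tilde\beta(h))$ with $\Lip_af(x,y)$ times the product distance between $(\tilde\alpha(h),y)$ and $(x,\tilde\beta(h))$. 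This yields $a|D_Xf|+b|D_Yf|\le \|(a,b)\|\,\Lip_af$ for all $a,b\ge 0$, i.e.\ the sharp bound, and density in energy extends it to $W^{1,2}$. Without the sharp constant here, the final pointwise identity fails: your Steps~2--3 still give $|Du|\le \sqrt{|D_Xu|^2+|D_Yu|^2}$ for all $u\in J^{1,2}$, but the reverse inequality is exactly the sharp Step~1.

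A secondary issue: in Step~2, ``extending by linearity'' from simple tensors to finite sums $h=\sum_j u_jv_j$ is not automatic. Along a product curve $(\alpha,\beta)$ one needs $\big|\sum_j (u_j\circ\alpha)'_t\, v_j(\beta_t)\big|\le |D_Xh|(\alpha_t,\beta_t)\,|\alpha'_t|$ for a.e.\ $t$, but the exceptional curve family on which $|D_Xh|(\cdot,y)$ fails to be an upper gradient of $h(\cdot,y)$ depends a priori on the value $y=\beta_t$, which varies with $t$. The paper resolves this via the \emph{canonical} minimal upper gradients of \cite[Lemma~4.2]{teriseb}, which provide a single $\Mod_p$-null family $\Gamma_X$ working uniformly for all linear combinations $\sum_j c_j u_j$. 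Your Cauchy--Schwarz computation for a single simple tensor is correct, but the passage to finite sums needs this uniformity.
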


\begin{remark}
If the space $X\times Y$ is equipped with a product metric $\|(d_X,d_Y)\|$ induced by some norm $\|\cdot\|$ on $\R^2$, we obtain that $|Df| = \|(|Df(\cdot,y)|(x), |Df(x,\cdot)|(y))\|'$ where  $\|\cdot\|'$ is a form of dual norm, cf. Theorem \ref{thm:gen_main}.
\end{remark}

In particular we have the following corollary.

\begin{cor}\label{cor:fin-dim}
If each of the factors $X$ and $Y$ is either infinitesimally Hilbertian or has finite Hausdorff dimension, then $W^{1,2}(X\times Y)=J^{1,2}(X,Y)$ with equal norms.
\end{cor}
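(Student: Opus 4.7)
The plan is to deduce Corollary \ref{cor:fin-dim} directly from Theorem \ref{thm:main} by verifying that both classes of spaces appearing in the hypothesis fall within the class of infinitesimally quasi-Hilbertian spaces. The excerpt already signals that this reduction is available: Proposition \ref{prop:dirichletexistence} is announced to cover exactly these two cases. So the work of the corollary is entirely to check the assumption of the main theorem on each factor, and then invoke Theorem \ref{thm:main}.

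First I would handle the infinitesimally Hilbertian case. Here $W^{1,2}(X)$ is, by definition, a Hilbert space under its native norm, and the Cheeger energy
\begin{equation*}
	\mathcal E(u,u)=\int_X |Du|^2\,\ud\mu
\end{equation*}
is a closed Dirichlet form with domain $W^{1,2}(X)$ whose associated norm is \emph{equal} (not merely equivalent) to the Sobolev norm. So the defining condition of Definition \ref{def:inf-quasi-hilb} holds trivially with constant $1$.

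Second I would deal with the case of finite Hausdorff dimension. By the results of \cite{teriseb}, such a space admits a $2$-weak differentiable structure, i.e.\ a countable collection of charts $(U_\alpha,\varphi_\alpha)$ with $\varphi_\alpha\co U_\alpha\to \R^{n_\alpha}$ such that every $f\in W^{1,2}(X)$ has a measurable ``gradient matrix'' expression in these coordinates, and the minimal weak upper gradient $|Df|$ is comparable, via a measurable Gram-type tensor field $G_\alpha$, to the Euclidean norm of the coordinate derivatives. Then the bilinear form
\begin{equation*}
	\mathcal E(u,v)=\sum_\alpha \int_{U_\alpha} \langle G_\alpha \nabla(u\circ\varphi_\alpha^{-1}),\nabla (v\circ\varphi_\alpha^{-1})\rangle\,\ud\mu
\end{equation*}
(suitably truncated on overlaps) is a closed, Markovian, symmetric form on $L^2(X)$ with domain $W^{1,2}(X)$, and the comparability of $G_\alpha$ with the pointwise upper gradient norm yields an equivalence of $\sqrt{\|u\|_{L^2}^2+\mathcal E(u,u)}$ with $\|u\|_{W^{1,2}}$. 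This is precisely the statement of Proposition \ref{prop:dirichletexistence}, so I would simply cite it.

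With both factors shown to be infinitesimally quasi-Hilbertian, Theorem \ref{thm:main} applies and gives $W^{1,2}(X\times Y)=J^{1,2}(X,Y)$ together with the pointwise identity for the minimal weak upper gradient, which in particular yields equality of the $W^{1,2}$ and $J^{1,2}$ norms. The only genuine obstacle hiding in this plan is the construction of the Dirichlet form in the finite-dimensional case --- producing a closed, Markovian form whose associated norm is equivalent to the intrinsic Sobolev norm --- but this is absorbed into the invocation of Proposition \ref{prop:dirichletexistence}.
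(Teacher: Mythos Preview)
Your proposal is correct and follows essentially the same route as the paper: verify that each hypothesis (infinitesimally Hilbertian, finite Hausdorff dimension) implies infinitesimal quasi-Hilbertianity --- the first trivially, the second via the existence of a $2$-weak differentiable structure and Proposition~\ref{prop:dirichletexistence} --- and then invoke Theorem~\ref{thm:main}. One cosmetic remark: the expression $\nabla(u\circ\varphi_\alpha^{-1})$ is not quite right, since the charts $\varphi_\alpha$ are merely Lipschitz and not invertible; the paper instead uses the $p$-weak differential $\ud_x u\in(\R^{n_\alpha})^*$ and a pointwise inner product on $(\R^{n_\alpha})^*$, but as you ultimately defer to Proposition~\ref{prop:dirichletexistence} this does not affect the validity of your argument.
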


Theorem \ref{thm:main} follows by combining three ingredients: 1) standard theory of Dirichlet forms and the elementary inclusion $W^{1,2}(X\times Y)\subset J^{1,2}(X,Y)$, 2) the non-trivial fact that the inclusion $W^{1,2}(X\times Y)\subset J^{1,2}(X,Y)$ has norm one, and 3) the equality of the norms on the algebraic tensor product $W^{1,2}(X)\otimes W^{1,2}(Y)$. We establish the last two results in the more general setting when $p> 1$ and the product space $X\times Y$ is equipped with a product metric given by a possibly non-Euclidean planar norm (see also \cite{teritapioseb} where the same setting is used). We will also address the applicability of the results to the case of $p=1$.

\subsection{Norm inequalities and equalities in the inclusion $W^{1,p}(X\times Y)\subset J^{1,p}(X,Y)$}

Let $(X\times Y,d,\mu\times\nu)$ be the product of two metric measure spaces $X=(X,d_X,\mu)$ and $Y=(Y,d_Y,\nu)$, where the product metric is given by $d=\|(d_X,d_Y)\|$ for a given planar norm $\|\cdot\|$, and let $p>1$.

For $f\in J^{1,p}(X,Y)$ we denote by $|D_Xf|$ and $|D_Yf|$ the $L^p(X\times Y)$-functions $(x,y)\mapsto |Df(\cdot,y)|(x)$ and $(x,y)\mapsto |Df(x,\cdot)|(y)$, respectively, and replace \eqref{eq:derivative} with the comparable quantity
\begin{align}\label{eq:j1p-ug}
\|(|D_Xf|,|D_Yf|)\|'\in L^p(X\times Y).
\end{align}
Here $\|(a,b)\|'\defeq \sup\{ at+bs:\ s,t\ge 0  \}$ is the \emph{partial dual norm} of $\|\cdot\|$. Notice that the Euclidean norm is its own partial dual and thus, for $d=\sqrt{d_X^2+d_Y^2}$, \eqref{eq:derivative} and \eqref{eq:j1p-ug} coincide. The first of the two results states that the minimal $p$-weak upper gradient always dominates \eqref{eq:j1p-ug}.
\begin{thm}\label{thm:embedding-norm-one}
	Let $p \in (1,\infty)$. If $f\in W^{1,p}(X\times Y)$ then $f\in J^{1,p}(X,Y)$ and
\begin{align}\label{eq:embedding-norm-one}
	\|(|D_Xf|,|D_Yf|)\|'\le |Df|
\end{align}
$\mu\times\nu$-almost everywhere.
\end{thm}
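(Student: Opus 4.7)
The plan is to prove pointwise the inequality
\[
\tilde g(x,y):=\|(|D_Xf|(x,y),|D_Yf|(x,y))\|'\le |Df|(x,y)\quad\mu\times\nu\text{-a.e.}
\]
By the definition of the partial dual norm, this reduces to establishing, for each fixed $(t,s)$ in the positive cone with $\|(t,s)\|\le 1$, the bound
\[
t\,|D_Xf|(x,y)+s\,|D_Yf|(x,y)\le |Df|(x,y)\qquad\mu\times\nu\text{-a.e.},
\]
whose supremum over such $(t,s)$ yields \eqref{eq:embedding-norm-one}. The axis cases are handled by a $\Mod_p$-Fubini slicing argument: for any Lipschitz curve $\alpha$ in $X$ and any $y\in Y$, the product curve $t\mapsto(\alpha(t),y)$ has product-metric speed $\|(1,0)\|\,|\alpha'|(t)$, so the upper-gradient inequality for $|Df|$ on $X\times Y$ restricts to show that $\|(1,0)\|\,|Df|(\cdot,y)$ is a $p$-weak upper gradient of $f(\cdot,y)$ for $\nu$-a.e. $y$. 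This simultaneously gives $f\in J^{1,p}(X,Y)$ and the axis bounds $|D_Xf|\le \|(1,0)\|\,|Df|$, $|D_Yf|\le\|(0,1)\|\,|Df|$.

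For the mixed directions $t,s>0$, I would test the upper gradient inequality along diagonal product curves $\sigma(r):=(\alpha(tr),\beta(sr))$, with $\alpha,\beta$ unit-speed Lipschitz curves in $X,Y$. Since $\sigma$ has product-metric speed $\|(t,s)\|\le 1$, the inequality reads
\[
|f(\alpha(tr),\beta(sr))-f(\alpha(0),\beta(0))|\le \|(t,s)\|\int_0^r|Df|(\sigma(\rho))\,d\rho.
\]
The key step is to select, over a rich family of pairs $(\alpha,\beta)$, curves along which the left-hand side asymptotically realizes $(t\,|D_Xf|+s\,|D_Yf|)(\alpha(0),\beta(0))\cdot r$ as $r\to 0$. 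In the Euclidean case this corresponds to orienting $\alpha$ along $\nabla_x f$ and $\beta$ along $\nabla_yf$ with matching signs; in a general metric space, one uses the characterization of $|D_Xf|$, $|D_Yf|$ as rates of $f$-change achievable along approximating unit-speed curves in each factor, together with a sign-matched measurable selection that makes the two fiberwise increments combine constructively rather than cancel. Dividing by $r$ and passing to the limit at Lebesgue points of $|D_Xf|,|D_Yf|$ yields $t\,|D_Xf|+s\,|D_Yf|\le \|(t,s)\|\,|Df|\le |Df|$.

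The main obstacle is precisely this sign-matched measurable selection on a full-measure subset of $X\times Y$: in a general metric space the fiberwise minimal upper gradients lack canonical directions, so one must interleave the $\Mod_p$-Fubini structure on $X\times Y$ with Mazur/Fuglede-type selections of approximating unit-speed curves in $X$ and $Y$, and arrange that the exceptional sets arising from each factor can be avoided simultaneously. Care is needed so that the approximating curves realize the fiberwise minimal upper gradients densely enough in $(t,s)$-direction to produce, in the limit, the desired linear combination bound.
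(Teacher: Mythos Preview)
Your overall strategy --- fix $(t,s)$ with $\|(t,s)\|\le 1$ and prove $t|D_Xf|+s|D_Yf|\le |Df|$ pointwise a.e.\ by testing along ``diagonal'' curves $\sigma(r)=(\alpha(tr),\beta(sr))$ with $\alpha,\beta$ chosen so that the factor derivatives are realized --- is morally the right picture, and it is close in spirit to the paper's argument. However there is a genuine gap that your proposal does not address, and it is not the measurable-selection issue you flag.

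The gap is a \emph{cross-term} problem. To obtain a lower bound on $|f(\alpha(tr),\beta(sr))-f(x_0,y_0)|$ you must decompose it, e.g.\ as
\[
\big[f(\alpha(tr),\beta(sr))-f(x_0,\beta(sr))\big]+\big[f(x_0,\beta(sr))-f(x_0,y_0)\big].
\]
The second bracket is controlled by your choice of $\beta$, but the first is the variation of $f(\cdot,\beta(sr))$ along $\alpha$, whereas $\alpha$ was selected to realize the derivative of $f(\cdot,y_0)$. For a general Sobolev $f$ there is no continuity mechanism ensuring these agree as $r\to 0$; the slice $f(\cdot,\beta(sr))$ need not even lie in the ``good'' set for $\alpha$. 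There is a second, related coordination issue: the upper-gradient inequality for $|Df|$ only holds for $\Mod_p$-a.e.\ curve in $X\times Y$, while your $\sigma$ is built from very specific $\alpha,\beta$ (those realizing $|D_Xf|,|D_Yf|$). Arranging that these particular diagonals avoid the product exceptional set is not automatic.

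The paper bypasses both difficulties with two moves you are missing. First, it reduces to bounded Lipschitz $f$ via density in energy (so it suffices to prove $\|(|D_Xf|,|D_Yf|)\|'\le \Lip_af$ for Lipschitz $f$, and then use lower semicontinuity). Second --- and this is the key trick --- for Lipschitz $f$ one does \emph{not} run a curve through $(x,y)$; instead one compares the two points $(\tilde\alpha(h),y)$ and $(x,\tilde\beta(h))$. Their $f$-difference telescopes exactly through $(x,y)$ as
\[
f(\tilde\alpha(h),y)-f(x,\tilde\beta(h))=[f(\tilde\alpha(h),y)-f(x,y)]-[f(x,\tilde\beta(h))-f(x,y)],
\]
so no cross term ever appears, and $\Lip_af(x,y)$ bounds this difference quotient since $d\big((\tilde\alpha(h),y),(x,\tilde\beta(h))\big)=\|(ah,bh)\|+o(h)$. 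Because $\Lip_af$ controls \emph{all} nearby point pairs (not merely $\Mod_p$-a.e.\ curves), the coordination issue also vanishes. The curves $\alpha,\beta$ realizing $|D_Xf|,|D_Yf|$ are produced, as you anticipate, from a representation of the minimal upper gradient as a supremum of directional derivatives (the paper uses \cite{teriseb}); but this is used only on each factor separately, never on the product.
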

In particular, for the Euclidean product metric $d=\sqrt{d_X^2+d_Y^2}$ Theorem \ref{thm:embedding-norm-one} yields the inequality
\begin{align*}
	\sqrt{|D_Xf|^2+|D_Yf|^2}\le |Df|,\quad f\in W^{1,p}(X\times Y).
\end{align*}

Although it is straightforward to obtain the estimate $\|(|D_Xf|,|D_Yf|)\|'\le C|Df|$ for some $C>0$ independent of $f$ from the definitions, Theorem \ref{thm:embedding-norm-one} is new and was previously only known for general spaces when $p=2$ and $\|\cdot\|$ is the Euclidean norm, by work of  Ambrosio--Gigli--Savar\'e \cite{ags14b} via different techniques (see also \cite{AmbrosioPinamontiSpeight}).  Our approach uses a density in energy argument \cite{seb2020,ambgigsav} to reduce the proof of Theorem \ref{thm:embedding-norm-one} to a simple, yet novel, inequality for Lipschitz functions (see Proposition \ref{prop:lip_a-est} below).


Our next result establishes \emph{equality} in \eqref{eq:embedding-norm-one} in the \emph{algebraic tensor product} $W^{1,p}(X)\otimes W^{1,p}(Y)\subset W^{1,p}(X\times Y)$ consisting of finite sums of simple tensor products, i.e. functions of the form
\[
f(x,y)=\sum_j^N\varphi_j(x)\psi_j(y),\quad \varphi_j\in W^{1,p}(X),\ \psi_j\in W^{1,p}(Y),\quad j=1,\ldots,N.
\]
\begin{thm}\label{thm:norm-equality-on-tensor-prod}
	Let $p \in (1,\infty)$. If $f\in W^{1,p}(X)\otimes W^{1,p}(Y)$ then 
	\begin{align*}
		\|(|D_Xf|,|D_Yf|)\|'= |Df|
	\end{align*}
	$\mu\times\nu$-almost everywhere.
\end{thm}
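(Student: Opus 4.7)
Since Theorem~\ref{thm:embedding-norm-one} already provides $\|(|D_Xf|,|D_Yf|)\|'\le|Df|$ almost everywhere, the task is to establish the reverse pointwise inequality. My plan is to show that the function $g:=\|(|D_Xf|,|D_Yf|)\|'$ is itself a $p$-weak upper gradient of $f$ on $X\times Y$; the minimality of $|Df|$ then forces $|Df|\le g$ a.e., producing the equality.

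Fix a representation $f=\sum_{j=1}^{N}\varphi_j\otimes\psi_j$. Lifting admissible functions along the coordinate projections $\pi_X,\pi_Y$ (i.e.\ replacing $\rho_X$ on $X$ by $\rho_X\circ\pi_X$ on $X\times Y$, and symmetrically) shows that for $\mathrm{Mod}_p$-a.e.\ curve $\gamma=(\alpha,\beta)$ in $X\times Y$ both projections $\alpha,\beta$ are $\mathrm{Mod}_p$-a.e.\ curves in $X$ and $Y$ respectively, so every $\varphi_j\circ\alpha$ and $\psi_j\circ\beta$ is absolutely continuous with the expected weak-upper-gradient bound on its derivative. Applying the Leibniz rule summand-wise gives
\[
(f\circ\gamma)'(t)=A(t)+B(t)
\]
for a.e.\ $t$, where $A(t)=\sum_j(\varphi_j\circ\alpha)'(t)\psi_j(\beta(t))$ and $B(t)=\sum_j\varphi_j(\alpha(t))(\psi_j\circ\beta)'(t)$.

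The crucial observation is that $A(t)$ coincides with the derivative at $s=t$ of the \emph{frozen-$y$} map $s\mapsto f(\alpha(s),\beta(t))=\sum_j\psi_j(\beta(t))\varphi_j(\alpha(s))$, which is the composition of $\alpha$ with the slice $f(\cdot,\beta(t))\in W^{1,p}(X)$ whose minimal $p$-weak upper gradient is precisely $|D_Xf|(\cdot,\beta(t))$. For each \emph{fixed} height $y_0$ the slice-wise upper gradient inequality gives $|(f(\cdot,y_0)\circ\alpha)'(s)|\le|D_Xf|(\alpha(s),y_0)|\alpha'|(s)$ for a.e.\ $s$, and the main technical step is to promote this to the diagonal estimate
\[
|A(t)|\le |D_Xf|(\gamma(t))|\alpha'|(t)\qquad\text{for a.e.\ }t
\]
along the moving height $y=\beta(t)$. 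I expect this to be the hardest part of the proof. I would carry it out by selecting a jointly Borel measurable representative of $|D_Xf|$ on $X\times Y$ and using a Fubini argument in $[0,1]\times Y$ together with the Fuglede-type fact that $\mathrm{Mod}_p$-a.e.\ curve $\beta$ in $Y$ satisfies $\beta_{\#}\mathrm{Leb}\ll\nu$, which allows one to avoid the exceptional $\nu$-null height sets encountered along $\gamma$.

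With the analogous bound $|B(t)|\le|D_Yf|(\gamma(t))|\beta'|(t)$ in hand, the partial duality $aS+bT\le\|(a,b)\|'\,\|(S,T)\|$ for $a,b,S,T\ge 0$ applied with $(a,b)=(|D_Xf|,|D_Yf|)(\gamma(t))$ and $(S,T)=(|\alpha'|,|\beta'|)(t)$ yields
\[
|(f\circ\gamma)'(t)|\le g(\gamma(t))\,|\gamma'|(t)\qquad\text{for a.e.\ }t,
\]
since $\|(|\alpha'|,|\beta'|)\|=|\gamma'|$. Integrating along $\gamma$ then produces $|f(\gamma(1))-f(\gamma(0))|\le\int_\gamma g\,ds$ for $\mathrm{Mod}_p$-a.e.\ $\gamma$, i.e.\ $g$ is a $p$-weak upper gradient of $f$, completing the proof.
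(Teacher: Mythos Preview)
Your overall strategy—showing that $g\defeq\|(|D_Xf|,|D_Yf|)\|'$ is a $p$-weak upper gradient of $f$ via the Leibniz decomposition along $\gamma=(\alpha,\beta)$ and the partial duality inequality—matches the paper's, and you correctly isolate the diagonal estimate $|A(t)|\le|D_Xf|(\gamma(t))\,|\alpha'|(t)$ as the crux: for each \emph{fixed} height $y_0$ the slice-wise bound holds for $\Mod_p$-a.e.\ $\alpha$ and a.e.\ $t$, but both exceptional sets depend on $y_0$, while you need the bound at the moving height $y_0=\beta(t)$. However, your proposed fix has a genuine gap. The ``Fuglede-type fact'' $\beta_\#\mathrm{Leb}\ll\nu$ is false (curves that rest on a subinterval—a family of positive $p$-modulus—push Lebesgue measure to a measure with an atom); at best one has the speed-weighted version $\beta_\#(|\beta'|\,\mathrm{Leb})\ll\nu$. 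More fundamentally, a Fubini argument in $[0,1]\times Y$ cannot help: the graph $\{(t,\beta(t)):t\in[0,1]\}$ is $\mathrm{Leb}\times\nu$-null whenever $\nu$ is non-atomic, so knowing that the bad set is $\mathrm{Leb}\times\nu$-null gives no information about its intersection with the graph.

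The paper resolves this with \emph{canonical minimal upper gradients} \cite[Lemma~4.2]{teriseb}: writing $\varphi=(\varphi_1,\dots,\varphi_N)\colon X\to\R^N$, one obtains a jointly Borel $\Phi\colon X\times(\R^N)^*\to[0,\infty]$ with $\Phi(\cdot,\bm\xi)$ a representative of $|D(\bm\xi\circ\varphi)|$ for every $\bm\xi$, together with a \emph{single} $\Mod_p$-null family $\Gamma_X$ and, for each $\alpha\notin\Gamma_X$, a \emph{single} null set $E_\alpha\subset[0,1]$ such that $|(\bm\xi\circ\varphi\circ\alpha)'_t|\le\Phi(\alpha_t,\bm\xi)\,|\alpha'_t|$ for \emph{every} $\bm\xi$ and every $t\notin E_\alpha$. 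Substituting the $t$-dependent vector $\bm\xi=(\psi_j(\beta(t)))_j$ then yields the diagonal estimate immediately. The content of the cited lemma is precisely this uniformity over $\bm\xi$—obtained, roughly, via a countable dense set of coefficient vectors, the a.e.\ seminorm property of $\bm\xi\mapsto|D(\bm\xi\circ\varphi)|$, and a careful choice of representatives—and it is the missing ingredient in your sketch.
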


We show, using \emph{canonical minimal upper gradients} introduced in \cite{teriseb}, that \eqref{eq:j1p-ug} is a $p$-weak upper gradient of $f\in W^{1,p}(X)\otimes W^{1,p}(Y)$. Together with Theorem \ref{thm:embedding-norm-one} this suffices to demonstrate Theorem \ref{thm:norm-equality-on-tensor-prod}. Note that having constant one in \eqref{eq:embedding-norm-one} is important for the validity of this argument.

\bigskip\noindent The crux of Theorem \ref{thm:main} is that, for infinitesimally quasi-Hilbertian spaces, the algebraic tensor product is dense in the Beppo--Levi space (with $p=2$). Indeed, this is a standard result for domains of Dirichlet forms (see Proposition \ref{prop:dirichlet-tensor-prod-density}), and follows easily for Sobolev spaces under the infinitesimal quasi-Hilbertianity assumption. This completes the proof of Theorem \ref{thm:main} and also raises the natural question: when is $W^{1,p}(X)\otimes W^{1,p}(Y)$ dense in $J^{1,p}(X,Y)$? We expect that some separability assumption might be necessary, and formulate the question accordingly below.

\begin{question}\label{q}
Let $p\in [1,\infty)$. If $W^{1,p}(X)$ and $W^{1,p}(Y)$ are separable, is $W^{1,p}(X)\otimes W^{1,p}(Y)$ dense in $J^{1,p}(X,Y)$?
\end{question}
An affirmative answer to Question \ref{q} under the stronger assumption that $X$ and $Y$ admit $p$-weak differentiable structures would already be interesting, since it covers all spaces with finite Hausdorff dimension.

\begin{remark}
 The above Theorems \ref{thm:embedding-norm-one} and \ref{thm:norm-equality-on-tensor-prod} are stated for exponents $p>1$. In the proofs we use the equality of the 
 Newton-Sobolev space $N^{1,p}(X)$  defined by Shanmugalingam and Cheeger \cite{sha00, che99}, and the plan-Sobolev space $W^{1,p}(X)$ from Ambrosio, Gigli and Savar\'e \cite{ags14a}. The equality of these spaces is not yet available in the literature in the case $p=1$. Once proven, such equality would imply Theorems \ref{thm:embedding-norm-one} and \ref{thm:norm-equality-on-tensor-prod} also in the case $p=1$.
\end{remark}


\subsection{Notation and conventions} \label{subsec:notations}
Throughout the paper $X=(X,d_X,\mu)$ and $Y=(Y,d_Y,\nu)$ are metric measure spaces, by which we mean complete separable metric spaces equipped with measures that are finite on bounded sets. Given $p> 1$, we denote by $N^{1,p}(X)$ and $W^{1,p}(X)$ the \emph{Newton--Sobolev space}, and Sobolev space via test-plans, respectively. Both of these spaces are defined using the \emph{upper gradient inequality}. A function $f\in L^p(X)$  is in $N^{1,p}(X)$ if there exists a function $g\in L^p(X)$ so that
\begin{align}\label{eq:ug1}
|u(\gamma_1)-u(\gamma_0)|\le \int_0^1 g(\gamma_t)|\gamma_t'|\ud t 
\end{align}
holds for $\Mod_p$-a.e. curve. On the other, $f\in W^{1,p}(X)$, if  \eqref{eq:ug1} holds for $\bm\eta$-a.e. $\gamma$ for every \emph{$q$-test plan} $\bm\eta$. Modulus is an outer measure on curve families, and test plans are a family of measures on curve families. See \cite{ags14a} for a definition of test plans. For the properties the modulus of a curve family, $\Mod_p$, see \cite{HKST07}.

For each $f\in N^{1,p}(X)$ and $f\in W^{1,p}(X)$ there exists a minimal $|Du|\in L^p(X)$ so that
\begin{align}\label{eq:ug}
|u(\gamma_1)-u(\gamma_0)|\le \int_0^1 |Du|(\gamma_t)|\gamma_t'|\ud t 
\end{align}
holds for ``almost all'' absolutely continuous curves $\gamma\colon[0,1]\to X$. For $u\in N^{1,p}(X)$, \eqref{eq:ug} is required to hold for $\Mod_p$-almost every curve $\gamma$ whereas, and for $u\in W^{1,p}(X)$, \eqref{eq:ug} holds for $\bm\eta$-a.e. $\gamma$ for every \emph{$q$-test plan} $\bm\eta$. The minimal objects $|Du|$ associated to each case agree $\mu$-almost everywhere (in this notation we suppress its dependence on $p$ and on the metric) and we have that $W^{1,p}(X)=N^{1,p}(X)$ for $p>1$ with equal norms\footnote{The equality holds up to the subtle issue of choosing appropriate representatives: $N^{1,p}(X)\subset W^{1,p}(X)$, but for every $f\in N^{1,p}(X)$ there exists a function $\tilde{f} \in W^{1,p}(X)$ with $\tilde{f}=f$ almost everywhere. A further difference is that functions in $N^{1,p}(X)$ are defined up to capacity-a.e. equivalence, whereas functions in $W^{1,p}(X)$ are defined up to an almost everywhere equivalence. The proof is contained in \cite[Theorem 10.7]{amb13}.} \cite{amb13,ambgigsav}. Here the Sobolev space $W^{1,p}(X)$ is equipped with norm
\begin{align*}
	\|u\|_{W^{1,p}(X)}=\big(\|u\|_{L^p(X)}^p+\||Du|\|_{L^p(X)}^p\big)^{1/p}.
\end{align*}

For functions $u\colon X\times Y\to\R$ we will define the sliced functions, for $x\in X, y\in Y$, by 
\[
u_x\defeq u(x,\cdot):Y\to \R,\quad u^y\defeq u(\cdot,y):X\to \R.
\]
When $u\in J^{1,p}(X,Y)$ we denote by $|D_Xu|,|D_Yu|\in L^p(X\times Y)$ the functions such that
\[
|D_Xu|(\cdot,y)=|Du^y|\quad \textrm{for $\nu$-a.e. $y\in Y$,}\quad  |D_Yu|(x\cdot,)=|Du_x|\quad  \textrm{for $\mu$-a.e. $x\in X$.}
\]
We equip $J^{1,p}(X,Y)$ with the norm
\begin{align*}
	\|f\|_{J^{1,p}(X,Y)}=\left(\int_{X\times Y}\big(|f|^p+(\|(|D_Xf|,|D_Yf|)\|')^p\big)\ud(\mu\times\nu)\right)^{1/p}.
\end{align*}

\begin{remark}\label{rmk:minimal_directional_ug}
It is straightforward to check that $|D_Xu|$ is given as the minimal $p$-weak upper gradient of $u$ when $X\times Y$ is equipped with the metric $d=d_X+\sqrt d_Y$, and similarly for $|D_Yu|$. In particular $|D_Xu|$ and $|D_Yu|$ can be chosen Borel measurable. 
\end{remark}


\section{The inclusion $W^{1,p}(X\times Y)\subset J^{1,p}(X,Y)$}

In this section we prove Theorems \ref{thm:embedding-norm-one} and \ref{thm:norm-equality-on-tensor-prod} for a general $p>1$. 
In the proof of Proposition \ref{prop:lip_a-est} below we will shorten the notation by using the evaluation 
map
\[
e_X\colon C([0,1];X)\times [0,1]\to X \colon (\gamma,t)\mapsto \gamma_t.
\]

%

We start with a seemingly elementary inequality, which however has hitherto not appeared. The authors in \cite{AmbrosioPinamontiSpeight} and \cite{ags14b} used a substantially different approach employing Hopf-lax equations, heat flows and further results. The following result is the key to our proof of the isometric inclusion, and perhaps gives a more transparent and geometric argument. 

\begin{prop}\label{prop:lip_a-est}
Let $p \in (1,\infty)$ and $f\in \LIP_b(X\times Y)$. Then 
\begin{align*}
\|(|D_Xf|,|D_Yf|)\|'\le \Lip_af
\end{align*}
$\mu\times\nu$-almost everywhere.
\end{prop}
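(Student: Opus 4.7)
The inequality $\|(|D_Xf|,|D_Yf|)\|'(x,y)\le \Lip_af(x,y)$ is pointwise, and by the definition of the partial dual norm, equivalent to
\[
t|D_Xf|(x,y)+s|D_Yf|(x,y)\le \Lip_af(x,y)\quad\text{for $\mu\times\nu$-a.e. }(x,y)
\]
for every $(t,s)\in[0,\infty)^2$ with $\|(t,s)\|\le 1$. Taking a countable dense family of such directions and using Borel measurability of both sides, it suffices to prove the scalar inequality for one fixed such $(t,s)$.

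My plan is to work along diagonal curves $\gamma(r)=(\alpha(r),\beta(r))$ in $X\times Y$ with $|\alpha'|\equiv t$ and $|\beta'|\equiv s$, so that $|\gamma'|_d\equiv\|(t,s)\|\le 1$, drawn from a product test plan $\bm{\eta}=\bm{\eta}_X\otimes\bm{\eta}_Y$. The upper gradient inequality for $\Lip_af$ on $(X\times Y,d)$ gives
\[
|f(\gamma_1)-f(\gamma_0)|\le \|(t,s)\|\int_0^1\Lip_af(\gamma_r)\,dr\le \int_0^1\Lip_af(\gamma_r)\,dr.
\]
In parallel, an L-shaped telescoping argument --- partitioning $[0,1]$ into $n$ subintervals, using the sliced upper gradient inequalities
\[
|f(\alpha_{(k+1)/n},\beta_{k/n})-f(\alpha_{k/n},\beta_{k/n})|\le t\int_{k/n}^{(k+1)/n}|D_Xf|(\alpha_r,\beta_{k/n})\,dr
\]
and analogously in $Y$, then summing and passing $n\to\infty$ --- should deliver for $\bm{\eta}$-a.e.\ $\gamma$,
\[
|f(\gamma_1)-f(\gamma_0)|\le\int_0^1\bigl(t|D_Xf|(\gamma_r)+s|D_Yf|(\gamma_r)\bigr)\,dr.
\]
The tight factor $\|(t,s)\|\le 1$, rather than the crude $t\|(1,0)\|+s\|(0,1)\|$ arising from slice-by-slice estimates (which can exceed $1$, e.g.\ in the Euclidean case when $t=s=1/\sqrt 2$), is the key point: the diagonal curve is genuinely shorter in $d$ than any L-shape joining its endpoints.

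The main obstacle is converting these two upper bounds on $|f(\gamma_1)-f(\gamma_0)|$ into the pointwise inequality $t|D_Xf|+s|D_Yf|\le \Lip_af$. To handle this I would localize via short test plans $\bm{\eta}^\epsilon$ concentrated near a $\mu\times\nu$-Lebesgue point $(x_0,y_0)$ of $|D_Xf|$, $|D_Yf|$ and $\Lip_af$. The $\Lip_af$-bound yields $\int|f(\gamma_1)-f(\gamma_0)|\,d\bm{\eta}^\epsilon\lesssim \epsilon\Lip_af(x_0,y_0)$ as $\epsilon\to 0$. For a matching \emph{lower} bound, the minimality of $|D_Xf|,|D_Yf|$ as weak upper gradients (via the Ambrosio--Gigli--Savar\'e characterization as a supremum of normalized difference-quotient averages over $q$-test plans, together with the evaluation map $e_X$) should let one select $\bm{\eta}_X^\epsilon$ so that $\epsilon^{-1}\int|f^y(\alpha_1)-f^y(\alpha_0)|\,d\bm{\eta}_X^\epsilon\to t|D_Xf|(x_0,y)$, and analogously for $Y$. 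Transferring this tightness to the product plan $\bm{\eta}^\epsilon$, combining with Lebesgue differentiation, and dividing through by $\epsilon$ yields the desired pointwise bound. This tightness-transfer step --- arranging that both slice-wise estimates are simultaneously saturated along a \emph{single} product test plan of diagonal curves --- is expected to be the principal technical difficulty of the argument.
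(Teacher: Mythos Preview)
Your plan correctly isolates the decisive obstacle --- simultaneous saturation of both directional increments along a single product test plan --- but does not resolve it, and there is a structural reason it resists the approach you sketch. A plan $\bm\eta_X^\epsilon$ that nearly realizes $|D_Xf^{y}|(x_0)$ in the endpoint sense depends on the slice $y$; in a product plan the $X$-increment of your L-shape is taken at the random height $\beta_0$, not at $y_0$. The transfer error $\big|[f^{\beta_0}(\alpha_1)-f^{\beta_0}(\alpha_0)]-[f^{y_0}(\alpha_1)-f^{y_0}(\alpha_0)]\big|$ is bounded only by $2L\,d_Y(\beta_0,y_0)$, which is of the \emph{same order} $\epsilon$ as the main term you wish to recover. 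Shrinking the $Y$-localization to kill this error forces the $Y$-curves to be too short to saturate $s|D_Yf|$, and the two scales cannot be decoupled without destroying the product structure. There is also a sign issue: saturating each L-piece in absolute value does not prevent cancellation in $f(\gamma_1)-f(\gamma_0)$.

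The paper's argument is different: it works infinitesimally at a single point and never forms a product plan. Using the pointwise representation of minimal upper gradients (applied on $X\times Y$ with the degenerate metrics $d_X+\sqrt{d_Y}$ and $\sqrt{d_X}+d_Y$, whose rectifiable curves run purely in one factor), one finds for a.e.\ $(x,y)$ curves $\alpha$ through $x$ in $X$ and $\beta$ through $y$ in $Y$, chosen \emph{independently}, with
\[
(1-\eps)|D_Xf|(x,y)\le \frac{(f^y\circ\alpha)'_{t_0}}{|\alpha'_{t_0}|},\qquad (1-\eps)|D_Yf|(x,y)\le \frac{(f_x\circ\beta)'_{s_0}}{|\beta'_{s_0}|}.
\]
After reparametrizing to $\tilde\alpha,\tilde\beta$ with speeds $a,b\ge 0$ (one of them reversed), the key step compares the two \emph{ends} of the L directly, rather than traversing any curve between them:
\begin{align*}
a\,\frac{(f^y\circ\alpha)'_{t_0}}{|\alpha'_{t_0}|}+b\,\frac{(f_x\circ\beta)'_{s_0}}{|\beta'_{s_0}|}
&=\lim_{h\to 0^+}\frac{f(\tilde\alpha(h),y)-f(x,\tilde\beta(h))}{h}\\
&\le \Lip_af(x,y)\,\lim_{h\to 0^+}\frac{d\big((\tilde\alpha(h),y),(x,\tilde\beta(h))\big)}{h}=\|(a,b)\|\,\Lip_af(x,y),
\end{align*}
since both points converge to $(x,y)$. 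The simultaneous-saturation problem disappears because $\alpha$ is chosen for the fixed slice $f^y$ and $\beta$ for the fixed slice $f_x$; no coupling between the two directions is ever needed.
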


\begin{proof}[Proof of Proposition \ref{prop:lip_a-est}]
%
The argument will proceed by finding, for a.e. point $(x,y)$ a curve in the $X$- and $Y$-directions along which the function $f$ has maximal derivative given by the minimal $p$-weak upper gradients. We do this by employing a result from \cite{teriseb}, but we also outline in Remark \ref{rmk:furtherargument} another argument inspired by one from  Cheeger and Kleiner \cite{cheegerkleiner} after the proof, which some readers may find helpful.

 Since $f \in \LIP_b(X\times Y)$, we have that $f \in W^{1,p}(X \times Y)$ when $X \times Y$ is equipped with the distance $d_X + \sqrt{d_Y}$. By Remark \ref{rmk:minimal_directional_ug} and \cite[Theorem 1.1]{teriseb} there exists a test plan $\bm\eta$ so that the disintegration $\{\bm\pi_{(x,y)}\}$ of the measure $\ud\bm\pi\defeq |\gamma'_t|\ud t\ud\bm\eta$ with respect to the evaluation map $e_{X\times Y}:C([0,1];X\times Y)\times [0,1]\to X\times Y$ satisfies

\begin{equation}\label{x-derivative}
    |D_Xf|(x,y)=|Df^y|(x)=\left\|\frac{(f^y\circ\alpha)'_t}{|\alpha_t'|}\right\|_{L^\infty(\bm\pi_{(x,y)})} 
\end{equation}
for $\mu\times\nu$-almost every $(x,y)\in \{|D_Xf|>0\}$. (Notice that every rectifiable curve in $(X\times Y,d_X+\sqrt{d_Y})$ is of the form $(\alpha,y)$ where $y\in Y$ is a constant curve and $\alpha$ is a rectifiable curve in $X$. One could obtain \eqref{x-derivative} for $p>1$ alternatively via the existence of master test plans introduced in \cite{pasqualetto20b} and by using Fubini's theorem.) By applying the same argument with metric $\sqrt{d_X}+d_Y$ we similarly obtain measures $\{\widetilde{\bm\pi}_{(x,y)}\}$ for almost every $(x,y) \in\{|D_Yf|>0\}$ so that 
\begin{equation}\label{y-derivative}
    |D_Yf|(x,y)=|Df_x|(y)=\left\|\frac{(f_x\circ\alpha)'_t}{|\alpha_t'|}\right\|_{L^\infty(\widetilde{\bm\pi}_{(x,y)})}.
\end{equation}

Let us fix $(x,y) \in X \times Y$ where both \eqref{x-derivative} and \eqref{y-derivative} hold.
For any $\eps>0 $ there exist $(\alpha,t_0)\in e_X\inv(x)$ and $(\beta,s_0)\in e_Y\inv(y)$ such that
\begin{align}\label{eq:almost_maximal_curves}
(1-\varepsilon)|D_Xf|(x,y)\le \frac{(f^y\circ\alpha)'_{t_0}}{|\alpha_{t_0}'|},\quad 
(1-\varepsilon)|D_Yf|(x,y)\le \frac{(f_x\circ\beta)'_{s_0}}{|\beta_{s_0}'|},
\end{align}
and the limits in all the relevant quantities exist. Let $a,b\ge 0$ and define the curves 
\begin{align*}
\tilde\alpha(t)=\alpha\left(t_0+\frac{a}{|\alpha'_{t_0}|}t\right),\quad \tilde \beta(s)=\beta\left(s_0-\frac{b}{|\beta'_{s_0}|}s\right)
\end{align*}
in a small neighbourhood of the origin. Then 
\begin{align*}
&a\frac{(f^y\circ\alpha)'_{t_0}}{|\alpha_{t_0}'|}+b\frac{(f_x\circ\beta)'_{s_0}}{|\beta_{s_0}'|}=(f^y\circ\tilde\alpha)'_0-(f_x\circ\tilde\beta)'_0\nonumber \\
& =\lim_{h\to 0^+}\frac{[f(\tilde\alpha(h),y)-f(x,y)]-[f(x,\tilde\beta(h))-f(x,y)]}{h}=\lim_{h\to 0^+}\frac{f(\tilde\alpha(h),y)-f(x,\tilde\beta(h))}{h}\nonumber\\
& \le \Lip_af(x,y)\limsup_{h\to 0^+}\frac{d(\tilde\alpha(h),y),(x,\tilde\beta(h)))}{h}.
\end{align*}
Note however that
\begin{align*}
	\frac{d(\tilde\alpha(h),y),(x,\tilde\beta(h)))}{h}=\left\|\left(\frac{d_X(\tilde\alpha(h),x)}{h},\frac{d_Y(\tilde\beta(h),y)}{h} \right)\right\| \stackrel{h\to 0^+}{\longrightarrow} \|(a,b)\|.
\end{align*}
Using \eqref{eq:almost_maximal_curves} we arrive at
\begin{equation}\label{eq:crucial_estimate}
(1-\eps)[a|D_Xf|(x,y)+b|D_Yf|(x,y)]\le \|(a,b)\|\Lip_af(x,y).
\end{equation}
Taking supremum over all $a,b\ge 0$ with $\|(a,b)\|=1$ in \eqref{eq:crucial_estimate} yields
\begin{align*}
	(1-\eps)\|(|D_Xf|(x,y),|D_Yf|(x,y))\|'\le \Lip_af(x,y)\quad \mu\times\nu\text{-a.e. } (x,y)\in X\times Y.
\end{align*}
Since $\eps>0$ is arbitrary the claim now follows.
\end{proof}

\begin{remark}\label{rmk:furtherargument} In the previous proof, the use of \cite{teriseb} is convenient, but the existence of curves $\alpha$ and $\beta$ as in \eqref{eq:almost_maximal_curves} with nearly maximal derivative is actually a much weaker conclusion. In fact, in the category of doubling spaces satisfying a Poincar\'e inequality, their existence follows from the work of Cheeger and Kleiner \cite[Theorem 5.2]{cheegerkleiner}. They gave a characterization of the minimal $p$-weak upper gradient of a Lipschitz function as a maximal directional derivative. In fact, the first part of the proof, which does not use the doubling or Poincar\'e assumptions, shows that a function $\hat{g}$ defined using the maximal directional derivatives, is an upper gradient. A minimal p-weak upper gradient is a.e. less than this upper gradient, and from this the existence of $\alpha$ and $\beta$ can be deduced. This idea played a central role in later developments, such as the seminal work of Bate \cite{bate12diff} characterizing Lipschitz differentiability spaces. 
\end{remark}

Proposition \ref{prop:lip_a-est} now implies Theorem \ref{thm:embedding-norm-one}.

\begin{proof}[Proof of Theorem \ref{thm:embedding-norm-one}]
Let $f\in W^{1,p}(X)$. By the density in energy (cf. \cite{ambgigsav} or \cite{seb2020} for an alternate proof) there exists a sequence $(f_j)\subset \LIP_b(X)$ such that $f_j\to f$ and $\Lip_af_j\to |Df|$ in $L^p(X)$ as $j\to \infty$. We also have, for $a,b\ge 0$ and any non-negative $\varphi\in C_b(X)$, that 
\begin{align*}
\int_{X\times Y}\varphi (a|D_Xf|+b|D_Yf|)\ud(\mu\times\nu)\le \liminf_{j\to\infty}\int_{X\times Y}\varphi(a|D_Xf_j|+b|D_Yf_j|)\ud(\mu\times\nu),
\end{align*}
(cf. Remark \ref{rmk:minimal_directional_ug} and the lower semicontinuity of the Cheeger energy \cite{che99,ambgigsav}). By Proposition \ref{prop:lip_a-est} (and the definition of the partial dual norm $\|\cdot\|'$) this implies that
\begin{align*}
 \int_{X\times Y}\varphi (a|D_Xf|+b|D_Yf|)\ud(\mu\times\nu) \le & \|(a,b)\|\liminf_{j\to\infty}\int_{X\times Y}\varphi \Lip_af_j\ud(\mu\times\nu)\\
= & \|(a,b)\|\int_{X\times Y}\varphi|Df|\ud(\mu\times\nu)
\end{align*}
for arbitrary $a,b$ and $\varphi$. Thus $a|D_Xf|+b|D_Yf|\le \|(a,b)\||Df|$ $\mu\times\nu$-a.e. for every $a,b\ge 0$. Then, by choosing a countable dense set of real numbers $a,b\geq 0$ we obtain the pointwise inequality
\[
\|(|D_Xf|,|D_Yf|)\|'=\sup_{(a,b)}\frac{a|D_Xf|+b|D_Yf|}{\|(a,b)\|}\le |Df|\quad\mu\times \nu-a.e.
\]

\end{proof}

Next we prove Theorem \ref{thm:norm-equality-on-tensor-prod}. In the proof we identify $\R^N$ with $(\R^N)^*$ in the standard way by identifying $\bar a\in \R^N$ with the functional $x\mapsto \bar a\cdot x\in (\R^N)^*$.

\begin{proof}[Proof of Theorem \ref{thm:norm-equality-on-tensor-prod}]
Let 
\[
h(x,y)=\sum_{i=1}^N f_i(x)g_i(y) \in W^{1,p}(X)\otimes W^{1,p}(Y)
\]
where $f_1, \dots, f_N \in W^{1,p}(X)$ and $g_1,\dots, g_N \in W^{1,p}(Y)$. Since each function in $W^{1,p}(X)$ is a.e. equal to a Newton-Sobolev function \cite[Theorem 10.7]{amb13}, we can choose Newton-Sobolev representatives for each $f_j$ and $g_j$ and consider the maps $\varphi=(f_1,\dots, f_N) \in N^{1,p}(X;\R^N)$ and $\psi =(g_1,\dots, g_N) \in N^{1,p}(Y;\R^N)$. By \cite[Lemma 4.2]{teriseb}, we have the following:
\begin{enumerate}
\item there exist maps $\Phi:X\times (\R^N)^* \to [0,\infty]$ and $\Psi:Y\times (\R^N)^*\to [0,\infty]$ so that $\Phi(\cdot, \bm \xi)$ is the minimal $p$-weak upper gradient for $\bm \xi \circ \varphi$ and $\Psi(\cdot, \bm\zeta)$ is the minimal $p$-weak upper gradient of $\bm\zeta \circ \psi$ for every $\bm\xi,\bm\zeta\in (\R^N)^*$; 
\item there are families of curves $\Gamma_X,\Gamma_Y$ with $\Mod_p(\Gamma_X)=\Mod_p(\Gamma_Y)=0$ so that for every $\alpha\not\in \Gamma_X$ and every $\bm\xi\in (\R^N)^*$ the function $\bm\xi\circ\varphi$ is absolutely continuous on $\alpha$ with upper gradient $\Phi(x,\bm\xi)$, and for every  $\beta\not\in \Gamma_Y$ and every $\bm\zeta\in (\R^N)^*$ the function $\bm\zeta\circ\psi$ is absolutely continuous on $\beta$ with upper gradient $\Psi(x,\bm\zeta)$; and
\item for each $\alpha\not\in \Gamma_X$ and each $\beta\not\in \Gamma_Y$ there exist null sets $E_\alpha \subset [0,1]$, $E_\beta\subset [0,1]$  so that for every $\bm\xi,\bm\zeta\in (\R^N)^*$ we have
\[ |(\bm\xi \circ \varphi\circ \alpha)_t'| \leq \Phi(\alpha_t,\bm\xi) |\alpha_t'|, \text{ for every } t\in [0,1] \setminus E_\alpha\]
and
\[ |(\bm\zeta \circ \psi \circ \beta)_t'| \leq \Psi(\beta_t,\bm\zeta) |\beta_t'|, \text{ for every } t\in [0,1] \setminus E_\beta.\]
\end{enumerate}

First, we show that $h\in W^{1,p}(X\times Y)$ and  that $|Dh| \leq \|(|D_X h|, |D_Y h|)\|'$, which follows from showing that $g\defeq \|(|D_X h|, |D_Y h|)\|'$ is a $p$-weak upper gradient of $h$.

Note that $|D_X h| (x,y)= \Phi(x,(g_i(y)))$  and $|D_Y h|(x,y) = \Psi(y,((f_i(x)))$ for $\mu\times\nu$-almost every $x,y$. Thus, it suffices to show that $\|(\Phi(x,(g_i(y))), \Psi(y,(f_i(x)))\|$ is a $p$-weak upper gradient of $h$. Let $\Gamma$ be the collection of absolutely continuous curves  $\gamma=(\alpha,\beta)$ so that $\alpha\not\in \Gamma_X,\beta\not\in \Gamma_Y$. The complement of $\Gamma$ has zero $\Mod_p$-modulus, as follows fairly directly from  the definition of modulus and the characterization of families of zero modulus  \cite[Lemma 5.2.8]{HKST07}: there exists a function $g\in L^p(X\times Y)$ so that $\int_\gamma g \, \ud s = \infty$ for each $\gamma \not\in \Gamma$). 

Fix $\gamma\in \Gamma$. Since $f_i \circ \alpha$ and $g_i \circ \beta$ are absolutely continuous, so is $h$ as a product and sum of absolutely continuous functions. Further, it is differentiable a.e. and the Leibniz rule applies:

\[
(h\circ \gamma)'_t =  \sum_{i=1}^N (f_i\circ \alpha)'_t g_i(\beta(t))+ \sum_{i=1}^N f_i(\alpha(t)) (g_i \circ \beta)_t'.
\]

Now, 

\begin{align*}
\left| \sum_{i=1}^N (f_i\circ \alpha)'_t g_i(\beta(t))\right|\leq \Phi(\alpha(t), (g_i(\beta(t)))|\alpha_t'|, \quad t\not\in E_\alpha,\textrm{ and} \\
\left| \sum_{i=1}^N (g_i\circ \beta)'_t f_i(\alpha(t))\right|\leq \Psi(\beta(t), (f_i(\beta(t)))|\beta_t'|,\quad t\not\in E_\beta.
\end{align*}
Thus, for a.e. $t$, we have
\begin{align*}
|(h\circ\gamma)_t'| &\leq \Phi(\alpha(t),(g_i(\beta(t))))|\alpha_t'|+\Psi(\beta(t), (f_i(\beta(t)))|\beta_t'|\\
&\leq \|(\Phi(\alpha(t),(g_i(\beta(t)))),\Psi(\beta(t),(f_i(\alpha(t)))))\|'\|(|\alpha_t'|,|\beta_t'|)\|\\
&= \|(\Phi(\alpha(t),(g_i(\beta(t)))),\Psi(\beta(t),(f_i(\alpha(t)))))\|'|\gamma_t'| )= g(\gamma_t) |\gamma_t'|.
\end{align*}
By integrating this, we obtain the upper gradient inequality \eqref{eq:ug1}. This shows that $g$ is a $p$-weak upper gradient of $h$, whence $|Dh| \leq \|(|D_X h|, |D_Y h|)\|'$ holds $\mu\times\nu$-almost everywhere. Theorem \ref{thm:embedding-norm-one} gives the opposite inequality, and completes the proof of the claim.
\end{proof}

\section{Infinitesimally quasi-Hilbertian spaces}\label{sec:quasi-hilb}

In this section we complete the proof of Theorem \ref{thm:main}. We begin with the definition of closed Dirichlet forms. 

\begin{defn}\label{def:dirichletform}
Let $\mathcal{D} \subset L^2(X)$ be a vector sub-space. A Dirichlet form $\cE$ (with domain $\cD$) is a map $\cE\colon \cD\times \cD \to \R$, which satisfies:
\begin{enumerate}
\item $\cE$ is bilinear,
\item $\cE$ is symmetric, i.e. $\cE(u,v)=\cE(v,u)$ for each $u,v\in \cD$,
\item $\cE$ is non-negative, i.e. $\cE(u,u) \geq 0$ for each $u\in \cD$, and
\item $\cD$ is dense in $L^2(X)$.
\end{enumerate}
We say that $\cE$ is closed, if $\cD$ when equipped with the norm $\|f\|_{\cE} \defeq \sqrt{\|f\|_{L^2(X)}^2+\cE(f,f)}$ is complete. 
\end{defn}

Next we recall the tensor product of Dirichlet forms. Recall the notation $u^y=u(\cdot,y)$ and $u_x=u(x,\cdot)$ for $u\colon X\times Y\to\R$ and $(x,y)\in X\times Y$. If $(\mathcal E_X,\mathcal D_X)$ and $(\mathcal E_Y,\mathcal D_Y)$ are Dirichlet forms on $X$ and $Y$, respectively, the domain of their tensor product $(\mathcal E,\mathcal D)$ is defined by
\begin{align*}
\mathcal D=\bigg\{ u\in L^2(X\times Y):\ u^y\in \mathcal D_X\ \nu\textrm{-a.e. }y\in Y,\ u_x\in \mathcal D_Y\ \mu\textrm{-a.e. }x\in X,\ \mathcal E(u,u)<\infty \bigg\},
\end{align*}
where
\begin{align*}
	\mathcal E(u,u)\defeq \int_X \mathcal E_Y(u_x,u_x)\ud\mu(x)+\int_Y\mathcal E_Y(u^y,u^y)\ud\nu(y).
\end{align*}
The bilinear form $\mathcal E$ is given by polarization:
\[
\mathcal E(u,v)=\frac{\mathcal E(u+v,u+v)-\mathcal E(u-v,u-v)}{4}.
\]
We observe that the algebraic tensor product $\mathcal D_X\otimes \mathcal D_Y\subset L^2(X\times Y)$ is contained in $\mathcal D$. Here the algebraic tensor product is given by 
\[
\mathcal D_X \otimes \mathcal D_Y = \left\{\sum_{i=1}^N a_i(x)b_i(y) : a_i\in \mathcal D_X, b_i \in \mathcal D_Y, N\in \N\right\}.
\]
 We refer to \cite[Chapter V]{bou-hir91} for the basic properties of the tensor product of Dirichlet forms, and record here the density of $\mathcal D_X\otimes \mathcal D_Y$ in $\mathcal D$, cf. \cite[Proposition 2.1.3(b)]{bou-hir91}.

\begin{prop}\label{prop:dirichlet-tensor-prod-density}
Let $(\mathcal E_X,\mathcal D_X)$ and $(\mathcal E_Y,\mathcal D_Y)$ be closed Dirichlet forms on $X$ and $Y$, and let $\mathcal E$ be their tensor product. Then the algebraic tensor product $\mathcal D_X\otimes \mathcal D_Y$ is dense in $\mathcal D$ with respect to $\|\cdot\|_\mathcal E$.
\end{prop}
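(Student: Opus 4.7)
The approach I would take relies on the fundamental correspondence between closed Dirichlet forms and non-negative self-adjoint operators, together with the theory of operator semigroups. The plan proceeds in three stages.

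First, I would identify the form-associated semigroup. Each of the closed forms $\mathcal E_X$, $\mathcal E_Y$ corresponds (via the Friedrichs-type construction) to a non-negative self-adjoint operator $A_X$ on $L^2(X)$ and $A_Y$ on $L^2(Y)$, with strongly continuous contraction semigroups $P_t^X=e^{-tA_X}$ and $P_t^Y=e^{-tA_Y}$. The main technical point here is to show that the Fubini-defined pair $(\mathcal E,\mathcal D)$ is itself a closed Dirichlet form and that it corresponds to the operator $A$ whose semigroup is $P_t=P_t^X\otimes P_t^Y$. One direction is easy: on the algebraic tensor product $\mathcal D_X\otimes \mathcal D_Y$ both quadratic forms coincide by a direct calculation using $(a\otimes b)_x=a(x)b$ and $(a\otimes b)^y=b(y)a$. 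The opposite inclusion amounts to showing that $\mathcal D$ lies inside the form domain of the tensor semigroup, which follows from the spectral theorem applied fiberwise and a Fubini argument using the positivity of $A_X$ and $A_Y$.

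Second, I would invoke the resolvent. Let $G_\alpha=(\alpha I+A)^{-1}=\int_0^\infty e^{-\alpha t}P_t\,\mathrm dt$, which is bounded as an operator $L^2(X\times Y)\to (\mathcal D,\|\cdot\|_{\mathcal E})$. A classical fact is that the range $G_\alpha(L^2(X\times Y))$ (equal to $\mathrm{Dom}(A)$) is $\|\cdot\|_{\mathcal E}$-dense in $\mathcal D$, e.g.\ because $\alpha G_\alpha u\to u$ in the form norm as $\alpha\to\infty$ for every $u\in\mathcal D$. It therefore suffices to approximate $G_\alpha h$ for $h\in L^2(X\times Y)$, and by continuity of $G_\alpha$ and density of $L^2(X)\otimes L^2(Y)$ in $L^2(X\times Y)$, only tensors $h=f\otimes g$ need to be treated.

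Third, I would exploit the tensor structure of the semigroup. For a simple tensor,
\[
G_\alpha(f\otimes g)=\int_0^\infty e^{-\alpha t}(P_t^Xf)\otimes (P_t^Yg)\,\mathrm dt,
\]
where for each $t>0$ we have $P_t^Xf\in\mathrm{Dom}(A_X)\subset\mathcal D_X$ and $P_t^Yg\in\mathrm{Dom}(A_Y)\subset\mathcal D_Y$. The integrand is continuous as a map $[0,\infty)\to (\mathcal D,\|\cdot\|_{\mathcal E})$ and exponentially decaying, so Riemann sums converge in $\|\cdot\|_{\mathcal E}$ to the Bochner integral; each such Riemann sum is a finite linear combination of simple tensors and hence lies in $\mathcal D_X\otimes \mathcal D_Y$. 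Combining the three stages yields the desired density. The main obstacle is the verification in the first stage that the abstract tensor semigroup really produces the Fubini form $\mathcal E$; this is where a careful spectral-theoretic computation is required, but once established the rest of the argument is a routine resolvent/semigroup approximation.
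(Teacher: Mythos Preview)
The paper does not give its own proof of this proposition: it simply records the statement and cites \cite[Proposition V.2.1.3(b)]{bou-hir91} (Bouleau--Hirsch) for a proof. Your semigroup/resolvent argument is precisely the standard route taken in that reference, so in effect you are reconstructing the cited proof rather than producing an alternative one.

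Your outline is correct. A few remarks on the details you flagged or left implicit. The identification in your first stage is indeed the substantive step: one shows that the closed form associated with the tensor semigroup $P_t=P_t^X\otimes P_t^Y$ has exactly the Fubini description of $(\mathcal E,\mathcal D)$, which amounts to computing $\lim_{t\to 0}t^{-1}\langle u-P_tu,u\rangle$ fiberwise and applying Fubini together with monotone convergence. For the third stage, note that for generic $f\in L^2(X)$, $g\in L^2(Y)$ the map $t\mapsto (P_t^Xf)\otimes(P_t^Yg)$ is not continuous at $t=0$ in $(\mathcal D,\|\cdot\|_{\mathcal E})$; the spectral bound $\mathcal E_X(P_t^Xf,P_t^Xf)\le (2et)^{-1}\|f\|_{L^2}^2$ gives $\|\cdot\|_{\mathcal E}$-norm of order $t^{-1/2}$ near $0$, which is still Bochner integrable against $e^{-\alpha t}$, so the Riemann-sum approximation goes through. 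With these points checked your argument is complete.
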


We will next see how Proposition \ref{prop:dirichlet-tensor-prod-density} implies the density of $W^{1,2}(X)\otimes W^{1,2}(Y)$ in $J^{1,2}(X,Y)$ (and thus Theorem \ref{thm:main}) for infinitesimally quasi-Hilbertian spaces. Recall that $X$ is said to be infinitesimally quasi-Hilbertian, if there exists a Dirichlet form $(\mathcal E,\mathcal D)$ on $X$ with $\mathcal D=W^{1,2}(X)$ and $\|\cdot\|_\mathcal E$ equivalent to $\|\cdot\|_{W^{1,2}(X)}$. Theorem \ref{thm:main} is a special case of the following theorem with $\|(a,b)\|\defeq \sqrt{a^2+b^2}$.

\begin{thm}\label{thm:gen_main}
	Suppose $X$ and $Y$ are infinitesimally quasi-Hilbertian and equip the product space $X\times Y$ with the metric $d=\|(d_X,d_Y)\|$ for a given norm $\|\cdot\|$ on $\R^2$. Then $W^{1,2}(X\times Y)=J^{1,2}(X,Y)$ and 
	\begin{align*}
		\|(|D_Xf|,|D_Yf|)\|'=|Df|
	\end{align*}
for every $f\in J^{1,2}(X,Y)$.
\end{thm}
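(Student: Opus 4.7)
The plan is to combine the three main ingredients of the paper: the norm-one inclusion of Theorem \ref{thm:embedding-norm-one} (applied with $p=2$), the equality on the algebraic tensor product from Theorem \ref{thm:norm-equality-on-tensor-prod}, and the density of the algebraic tensor product in the tensor product Dirichlet form from Proposition \ref{prop:dirichlet-tensor-prod-density}. The hard work having been done, the proof of Theorem \ref{thm:gen_main} reduces to a density/completeness argument.

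First, invoke the infinitesimal quasi-Hilbertian assumption to obtain closed Dirichlet forms $(\cE_X,\cD_X)$ and $(\cE_Y,\cD_Y)$ with $\cD_X=W^{1,2}(X)$, $\cD_Y=W^{1,2}(Y)$, and $\|\cdot\|_{\cE_X}$, $\|\cdot\|_{\cE_Y}$ equivalent to the respective Sobolev norms. Form the tensor product Dirichlet form $(\cE,\cD)$ as in the section. The next step is to check that $\cD=J^{1,2}(X,Y)$ as sets and that $\|\cdot\|_\cE$ is equivalent to $\|\cdot\|_{J^{1,2}(X,Y)}$. Indeed, the slice conditions defining $\cD$ and $J^{1,2}(X,Y)$ agree because $\cD_X=W^{1,2}(X)$ and $\cD_Y=W^{1,2}(Y)$; moreover
\[
\cE(u,u)\;\asymp\;\int_X\||Du_x|\|_{L^2(Y)}^2\,\ud\mu(x)+\int_Y\||Du^y|\|_{L^2(X)}^2\,\ud\nu(y)=\||D_Xu|\|_{L^2}^2+\||D_Yu|\|_{L^2}^2,
\]
and the right-hand side is comparable to $\||(|D_Xu|,|D_Yu|)\|'\|_{L^2}^2$ since any two norms on $\R^2$ are equivalent.

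Proposition \ref{prop:dirichlet-tensor-prod-density} then yields that $W^{1,2}(X)\otimes W^{1,2}(Y)$ is dense in $\cD$ with respect to $\|\cdot\|_\cE$, hence also dense in $J^{1,2}(X,Y)$ with respect to its own norm. Given $f\in J^{1,2}(X,Y)$, choose $f_n\in W^{1,2}(X)\otimes W^{1,2}(Y)$ with $f_n\to f$ in $J^{1,2}(X,Y)$. By Theorem \ref{thm:norm-equality-on-tensor-prod}, the sequence $(f_n)$ is Cauchy in $W^{1,2}(X\times Y)$ (with the product metric $d=\|(d_X,d_Y)\|$), so by completeness it converges in $W^{1,2}(X\times Y)$ to some $\tilde f$. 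Since both limits agree $L^2$-a.e., $\tilde f=f$, showing $f\in W^{1,2}(X\times Y)$. Lower semicontinuity of minimal $2$-weak upper gradients combined with the $L^2$-convergence $\|(|D_Xf_n|,|D_Yf_n|)\|'\to \|(|D_Xf|,|D_Yf|)\|'$ (guaranteed by $f_n\to f$ in $J^{1,2}(X,Y)$) then gives
\[
|Df|\;\le\;\liminf_{n\to\infty}|Df_n|\;=\;\liminf_{n\to\infty}\|(|D_Xf_n|,|D_Yf_n|)\|'\;=\;\|(|D_Xf|,|D_Yf|)\|'
\]
$\mu\times\nu$-a.e., by Theorem \ref{thm:norm-equality-on-tensor-prod} applied to each $f_n$.

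Finally, Theorem \ref{thm:embedding-norm-one} provides the reverse inequality $\|(|D_Xf|,|D_Yf|)\|'\le |Df|$ for every $f\in W^{1,2}(X\times Y)$, completing both the set equality $W^{1,2}(X\times Y)=J^{1,2}(X,Y)$ and the pointwise identity of minimal upper gradients. The only mildly delicate step is the identification $\cD=J^{1,2}(X,Y)$ with equivalent norms, but this is a direct unwinding of the definitions using the equivalence $\|\cdot\|_{\cE_X}\asymp \|\cdot\|_{W^{1,2}(X)}$ and the analogue for $Y$; no substantive obstacle remains once this is noted.
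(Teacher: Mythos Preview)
Your proof is correct and follows essentially the same approach as the paper: identify $\cD=J^{1,2}(X,Y)$ with equivalent norms, use the density of the algebraic tensor product (Proposition \ref{prop:dirichlet-tensor-prod-density}) together with Theorem \ref{thm:norm-equality-on-tensor-prod} to obtain a Cauchy sequence in $W^{1,2}(X\times Y)$, and conclude via Theorem \ref{thm:embedding-norm-one} and stability of weak upper gradients. The only cosmetic point is that your displayed chain with the pointwise $\liminf$ is better phrased (as the paper does) by noting that $|Df_n|=\|(|D_Xf_n|,|D_Yf_n|)\|'\to\|(|D_Xf|,|D_Yf|)\|'$ in $L^2$, so the limit is a $2$-weak upper gradient of $f$ (cf.\ \cite[Proposition 7.3.7]{HKST07}), whence $|Df|\le\|(|D_Xf|,|D_Yf|)\|'$ a.e.
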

In the proof we use the notation $A(u)\lesssim B(u)$ to indicate that there exists a constant $C>0$, independent of $u$ such that $ B(u)\le CA(u)$, and $A(u)\simeq B(u)$ if $A(u)\lesssim B(u)$ and $B(u)\lesssim A(u)$.
\begin{proof}
Let $\mathcal E_X$ and $\mathcal E_Y$ be closed Dirichlet forms on $X$ and $Y$ with domains $W^{1,2}(X)$ and $W^{1,2}(Y)$, respectively, such that $\|f\|_{\mathcal E_X}\simeq \|f\|_{W^{1,2}(X)}^2$ and $\|g\|_{\mathcal E_Y}\simeq \|g\|_{W^{1,2}(Y)}^2$ for all $f\in W^{1,2}(X)$ and $g\in W^{1,2}(Y)$. Then
\begin{align*}
	\|u\|_{J^{1,2}(X,Y)}^2& =\|u\|_{L^2(X\times Y)}^2+\int_{X\times Y}(\|(|D_Xu|,|D_Yu|)\|')^2\ud(\mu\times\nu)\\
	&\simeq \int_{X\times Y}(|u|^2+|D_Xu|^2+|D_Yu|^2)\ud(\mu\times\nu) \\
	 &\simeq  \int_X\|u_x\|_{W^{1,2}(Y)}^2\ud\mu(x)+\int_Y\|u^y\|_{W^{1,2}(Y)}^2\ud\nu(y)\\
	&\simeq \int_X(\|u_x\|_{L^2(Y)}^2+\mathcal E_Y(u_x,u_x))\ud\mu(x)+\int_Y\mathcal (\|u^y\|_{L^2(X)}^2+\mathcal E_X(u^y,u^y))\ud\nu(y) \\
	&\simeq  \|u\|_{L^2(X\times Y)}^2+\mathcal E(u,u)
\end{align*}
whenever $u\in L^2(X\times Y)$ is such that $u_x\in W^{1,2}(Y)$ for $\mu$-a.e. $x\in X$ and $u^y\in W^{1,2}(X)$ for $\nu$-a.e. $y\in Y$. From this it follows that $\mathcal D=J^{1,2}(X,Y)$ and that 
\begin{align}\label{eq:dir-norm-equiv-j12}
	\|u\|_\mathcal E\simeq\|u\|_{J^{1,2}(X,Y)},\quad u\in J^{1,2}(X,Y).
\end{align}

We now prove the claim in Theorem \ref{thm:gen_main}. Let $u\in J^{1,2}(X,Y)$. By Proposition \ref{prop:dirichlet-tensor-prod-density} there is a sequence $(u_j)\subset W^{1,2}(X)\otimes W^{1,2}(Y)$ such that $\|u_j-u\|_\mathcal E\to 0$ as $j\to \infty$. Theorem \ref{thm:norm-equality-on-tensor-prod} implies that 
\begin{align*}
\|u_j-u_l\|_{W^{1,2}(X\times Y)}&=\|u_j-u_l\|_{J^{1,2}(X,Y)}\lesssim \|u_j-u_l\|_\mathcal E,\\
g_j&=|Du_j|
\end{align*}
for each $j,l\in\N$. Thus $(u_j)$ is a Cauchy sequence in $W^{1,2}(X\times Y)$ and its limit (in $W^{1,2}(X\times Y)$) agrees almost everywhere with $u$ since $u$ is the $L^2$-limit of $(u_j)$. It follows that $u\in W^{1,2}(X\times Y)$ and, by Theorem \ref{thm:embedding-norm-one}, that $\|(|D_Xu|,|D_Yu|)\|'\le |Du|$. However, since $u_j\to u$ in $J^{1,2}(X,Y)$ as $j\to\infty$, we have that
$$|Du_j|=\|(|D_Xu_j|,|D_Yu_j|)\|' \stackrel{j\to\infty}{\longrightarrow} \|(|D_Xu|,|D_Yu|)\|'\quad\textrm{in }L^2(X\times Y)$$
and thus $\|(|D_Xu|,|D_Yu|)\|'$ is a 2-weak upper gradient of $u$ (cf. \cite[Proposition 7.3.7]{HKST07}), implying that $|Du|\le \|(|D_Xu|,|D_Yu|)\|'$. This completes the proof.
\end{proof}

\begin{remark}
	The proof of Theorem \ref{thm:gen_main} also yields the following statement: If $X$ and $Y$ are infinitesimally quasi-Hilbertian then their product $X\times Y$ with the metric $d=\|(d_X,d_Y)\|$ is also infinitesimally quasi-Hilbertian.
\end{remark}

Infinitesimally Hilbertian spaces are infinitesimally quasi-Hilbertian (recall that $X$ is infinitesimally Hilbertian if $\|\cdot\|_{W^{1,2}(X)}$ is given by an inner product). Another important class of examples are spaces $X$ admitting a $2$-weak differentiable structure or, equivalently, spaces with finitely generated tangent module $L^2(T^*X)$ in the sense of Gigli. To prove this fact we employ the following lemma on norms on finite dimensional vector spaces; see \cite[p.~460]{che99} for an original reference.

\begin{lemma}\label{lem:equivnorm} For every $k\in \N$ there exists a constant $c(k)$ so that for every $|\cdot|$ norm on a $k$-dimensional vector space $V$, with $k\in \N$, there exists an inner product $\langle, \rangle$ on $V$ so that 
\[
 c(k)^{-1} \sqrt{\langle v,v\rangle} \leq |v|\leq c(k) \sqrt{\langle v,v\rangle}.
\]
\end{lemma}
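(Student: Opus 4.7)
The plan is to deduce the lemma from John's ellipsoid theorem, which is the standard tool for producing an inner product that uniformly approximates a given norm on a finite-dimensional space with a dimension-dependent constant. Concretely, I expect $c(k)=\sqrt{k}$ to work.

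First, I would consider the closed unit ball $B=\{v\in V : |v|\le 1\}$ of the given norm, which is a centrally symmetric convex body in $V$. John's theorem (in its symmetric form) produces a unique ellipsoid $E\subset B$ of maximal volume and guarantees the inclusion $B\subset \sqrt{k}\,E$. I would then define $\langle\cdot,\cdot\rangle$ to be the unique inner product on $V$ for which $E$ is precisely the closed unit ball.

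With this choice, the two required inequalities fall out directly from the inclusions $E\subset B\subset \sqrt{k}\,E$ by homogeneity. If $\sqrt{\langle v,v\rangle}\le 1$ then $v\in E\subset B$ and hence $|v|\le 1$; rescaling yields $|v|\le \sqrt{\langle v,v\rangle}$. Conversely, if $|v|\le 1$ then $v\in B\subset \sqrt{k}\,E$, so $\sqrt{\langle v,v\rangle}\le \sqrt{k}$; rescaling gives $\sqrt{\langle v,v\rangle}\le \sqrt{k}\,|v|$. Both combine to the asserted comparison with $c(k)=\sqrt{k}$.

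The only subtle point — and the reason a naive compactness-on-the-unit-sphere argument does not suffice — is that the constant $c(k)$ must depend \emph{only} on $k$ and not on the particular norm; John's theorem supplies exactly this uniformity. I would emphasize that a self-contained proof can be given by first taking the inner product associated to the ellipsoid of maximum volume in $B$, whose existence is elementary by compactness in the space of positive-definite quadratic forms on $V$, and then verifying the $\sqrt{k}$ bound by a Lagrange multiplier argument at the optimum. However, since the statement is classical and only the existence of \emph{some} constant $c(k)$ depending on $k$ is used in the sequel, invoking John's theorem directly is by far the cleanest route.
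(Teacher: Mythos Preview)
Your proof via John's ellipsoid theorem is correct and gives the sharp constant $c(k)=\sqrt{k}$. The paper, however, proceeds differently: it passes to the dual space $V^*$ with its dual norm, takes the $k$-dimensional Hausdorff measure $\lambda$ on the dual unit ball $B_1$, and defines
\[
\langle v,w\rangle \defeq \int_{B_1}\langle v,v^*\rangle\,\langle w,v^*\rangle\,\ud\lambda(v^*).
\]
The upper bound is immediate since $|\langle v,v^*\rangle|\le |v|$ on $B_1$, and the lower bound follows by finding a small ball inside $B_1$ on which $\langle v,\cdot\rangle$ is bounded below by $|v|/4$.

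The trade-offs are as follows. Your approach is shorter, invokes a single classical theorem, and yields the optimal constant. The paper's integral construction gives a worse constant but produces the inner product by an \emph{explicit formula} in terms of the norm. This pays off in the subsequent Proposition, where one needs the family of inner products $\langle\cdot,\cdot\rangle_x$ (one for each point $x$) to depend measurably on $x$; the integral formula makes this transparent, whereas with John's ellipsoid one would additionally have to argue that the maximal-volume ellipsoid depends measurably (in fact continuously, by uniqueness) on the norm. Both routes work, but the paper explicitly invokes ``Lemma~\ref{lem:equivnorm} and its proof'' for the measurability, so if you adopt the John approach you should add a sentence addressing this point.
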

\begin{proof}
Let $V^*$ be the dual vector space to $V$ equipped with the dual norm $\|v^*\|=\sup_{v\in V, |v|=1} \langle v^*,v \rangle$. Let $B_1 \subset V^*$ be the closed unit ball with respect to this norm, and $\lambda$ the $k$-dimensional Hausdorff measure associated to the natural metric on $V^*$. 
By a classic Lemma of Kirchheim, $\lambda(B_1)=\omega_k$, where $\omega_k$ is the volume of the $k$-dimensional unit ball; see \cite[Lemma 6]{kir94}.

 We define the inner product via the natural map $V\to(V^*)^* \to L^\infty(B_1) \to L^2(B_1,\lambda)$, and set
\[
\langle v,w\rangle = \int_{B_1} \langle v,v^* \rangle \overline{\langle w, v^* \rangle } d\lambda_{v^*}. 
\]
The required inequality is true for $v=0$, and thus we consider the case where $v\neq 0$.
We immediately get $\langle v,v\rangle \leq \|v\|^2 \omega_k$, since $\langle v,v^*\rangle \leq 1$, for each $v^* \in B_1$. 
Next, take a $w^* \in B_1$ so that  $\langle w^*,v\rangle = |v|$. Let $\omega = \frac{w^*}{2}$, for which we have $B=B_{1/4}(\omega) \subset B_1$. 
Further, for every $a^* \in B$ we have $\langle a^*,v\rangle \geq \langle \omega, v\rangle - \langle \omega-a^*,v\rangle \geq |v|/2-|v|/4\geq |v|/4$. Thus, 
\[
\langle v,v\rangle \geq \lambda(B) |v|^2/16 \geq \frac{\omega_k}{4^{k+2}}|v|^2,
\]
from which the claim follows.
\end{proof}

\begin{prop}\label{prop:dirichletexistence} Suppose $X$ is a metric measure space such that $L^2(T^*X)$ is finitely generated. Then there exists a closed (and local and regular) Dirichlet form $\mathcal E$ with domain $W^{1,2}(X)$ and such that for some $C>0$
\[
\frac 1C \|Du\|_{L^2(X)}^2\le \mathcal E(u,u)\le C\|Du\|_{L^2(X)}^2,\quad u\in W^{1,2}(X).
\]
In particular, $X$ is infinitesimally quasi-Hilbertian.
\end{prop}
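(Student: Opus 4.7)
The plan is to produce $\mathcal E$ by integrating a pointwise inner product on the cotangent module, chosen to be equivalent to the pointwise norm with a universal (dimension-dependent) constant. Concretely, I would define
\[
\mathcal E(u,v)\defeq \int_X\langle du,dv\rangle_x\,\ud\mu(x),\quad u,v\in W^{1,2}(X),
\]
where $\langle\cdot,\cdot\rangle_x$ is a measurable family of inner products on the fibers of $L^2(T^*X)$ that is comparable to the pointwise norm $|\cdot|_x$ with constants depending only on the fixed rank bound of $L^2(T^*X)$.

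First, I would use the structure theory of finitely generated $L^\infty$-modules to decompose $X=\bigsqcup_{k=0}^{N}X_k$ measurably, where on each $X_k$ the module $L^2(T^*X)|_{X_k}$ admits a measurable local frame of exactly $n_k$ elements with $n_k\le N$. On $X_k$ the frame identifies the fiber at $\mu$-a.e.\ $x$ with $\R^{n_k}$, and the pointwise norm $|\cdot|_x$ becomes a measurable family of norms on $\R^{n_k}$. Applying Lemma \ref{lem:equivnorm} fiberwise then produces, for $\mu$-a.e.\ $x\in X_k$, an inner product $\langle\cdot,\cdot\rangle_x$ with
\[
c(n_k)^{-1}\sqrt{\langle v,v\rangle_x}\le |v|_x\le c(n_k)\sqrt{\langle v,v\rangle_x},
\]
and hence with universal constant $c(N)$. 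Measurability can be arranged either by checking that the explicit formula in Lemma \ref{lem:equivnorm} (integration against Hausdorff measure on the dual unit ball) varies measurably with the norm, or by replacing that construction with the (essentially unique) John ellipsoid of the unit ball of $|\cdot|_x$, whose Gram matrix depends measurably on $x$.

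Once $\langle\cdot,\cdot\rangle$ is in hand, bilinearity and symmetry of $\mathcal E$ are automatic, non-negativity is clear, and the pointwise comparison integrates to
\[
C^{-1}\|Du\|_{L^2(X)}^2\le \mathcal E(u,u)\le C\|Du\|_{L^2(X)}^2,\quad C=c(N)^2.
\]
From this, $\|\cdot\|_{\mathcal E}$ is equivalent to $\|\cdot\|_{W^{1,2}(X)}$ on $W^{1,2}(X)$, so the domain is complete and $\mathcal E$ is closed; density of $\mathcal D=W^{1,2}(X)$ in $L^2(X)$ follows since $W^{1,2}(X)$ contains bounded Lipschitz functions of bounded support. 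Locality is inherited pointwise from the locality of the differential $d$, and regularity follows from density in energy of Lipschitz functions with bounded support in $W^{1,2}(X)$, transported to $\mathcal E$ by the norm equivalence. The final ``infinitesimally quasi-Hilbertian'' conclusion is then a restatement of the norm equivalence.

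The main obstacle is the measurability of the fiberwise inner product assignment in the second step: Lemma \ref{lem:equivnorm} is purely pointwise, and one must verify that the chosen construction (John ellipsoid, Hausdorff-measure integral, or a Borel selection argument using the uniqueness of the minimizing ellipsoid) can be assembled into a Borel family $x\mapsto\langle\cdot,\cdot\rangle_x$ compatible with the local frame. Once this measurable selection is in place, the remaining verifications are standard consequences of the norm equivalence and properties of the minimal upper gradient.
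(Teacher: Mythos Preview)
Your proposal is correct and follows essentially the same route as the paper: decompose $X$ into pieces of constant dimension, use Lemma~\ref{lem:equivnorm} fiberwise to produce a comparable inner product, and integrate. The only difference is terminological---you phrase the decomposition via local frames of the finitely generated module $L^2(T^*X)$, whereas the paper invokes the equivalent language of $2$-weak charts $(U_i,\varphi_i)$ from \cite{teriseb}; these give the same data. You are also more explicit than the paper about the measurability issue and the remaining Dirichlet-form axioms (density of the domain, locality, regularity), which the paper either handles in one sentence or leaves to the reader.
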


\begin{proof}
By \cite[Theorem 1.11]{teriseb}, $X$ admits a $2$-weak differential structure, that is, there are countably many disjoint 2-weak charts $(U_i, \varphi_i)$, $i\in I$, such that $\mu(X\setminus \bigcup_iU_i)=0$ and the dimensions of the Lipschitz maps $\varphi_i\colon U_i \to \R^{n_i}$ satisfy $N\defeq \sup_in_i<\infty$. Moreover, for each $x\in U_i$, $i\in I$, there is a norm $|\cdot|_x$ on $(\R^{n_i})^*$ such that
\begin{align*}
|Df|(x)=|\ud_xf|_x\quad \mu-a.e.\ x\in U_i
\end{align*}
for every $f\in W^{1,2}(X)$, where $\ud f \colon U_i\to (\R^{n_i})^*$ is the $p$-weak differential of $f$.

By Lemma \ref{lem:equivnorm} and its proof, there exists $c(N)>$ so that for each $i\in I$ and $x\in U_i$ there exists an inner product $\langle\cdot,\cdot\rangle_x$ on $(\R^{n_i})^*$ with the property that $x\mapsto \langle \ud_xf,\ud_xf\rangle_x\colon U_i\to \R$ is measurable, and
\begin{align*}
c(N)\inv\sqrt{\langle\ud_xf,\ud_xf\rangle_x}\le |\ud_xf|_x\le c(N)	\sqrt{\langle\ud_xf,\ud_xf\rangle_x}
\end{align*}
for every $f\in W^{1,2}(X)$ and $x\in U_i$.

Define a bi-linear form $\mathcal E\colon W^{1,2}(X)\times W^{1,2}(X)\to \R$ by
\begin{align*}
	\mathcal E(u,v)=\sum_{i\in I}\int_{U_i}\langle\ud_xu,\ud_x v\rangle_x\ud\mu(x).
\end{align*}
Clearly $\mathcal E$ is a Dirichlet form with domain $W^{1,2}(X)$ and 
\begin{align*}
c(N)^{-2}\|Du\|_{L^2(X)}^2\le \mathcal E(u,u)\le c(N)^2\|Du\|_{L^2(X)}^2,\quad u\in W^{1,2}(X).
\end{align*}
Since $W^{1,2}(X)$ is a Banach space it follows that $\mathcal E$ is closed. It is not difficult to check that $\mathcal E$ is local and regular, and we leave it to the interested reader (see e.g. \cite[Chapter I]{bou-hir91}) This completes the proof. 
%
%
\end{proof}

\begin{proof}[Proof of Corollary \ref{cor:fin-dim}]
Infinitesimally Hilbertian spaces are trivially infinitesimally quasi-Hilbertian. Spaces of finite Hausdorff dimension admit a 2-weak differentiable structure and their $L^2$-cotangent module is finitely generated, cf. \cite[Theorems 1.5 and 1.11]{teriseb}. By Proposition \ref{prop:dirichletexistence} spaces of finite Hausdorff dimension are therefore infinitesimally quasi-Hilbertian. Corollary \ref{cor:fin-dim} follows immediately from this and Theorem \ref{thm:main}.
\end{proof}

\bibliographystyle{plain}
\bibliography{abib}
\end{document}